\numberwithin{equation}{section}
\DeclareMathOperator{\Span}{span}
\DeclareMathOperator{\Dom}{Dom}
\DeclarePairedDelimiter{\abs}{|}{|}
\newcommand{\dd}{\mathrm d}
\newcommand{\per}{\mathrm{per}}
\newcommand{\CC}{\mathbb{C}}
\newcommand{\NN}{\mathbb{N}}
\newcommand{\RR}{\mathbb{R}}
\newcommand{\cD}{{\mathcal D}}
\newcommand{\cH}{{\mathcal H}}
\newcommand{\cK}{{\mathcal K}}
\newcommand{\ft}{{\mathfrak t}}
\theoremstyle{plain}
\newtheorem{theorem}{Theorem}[section]
\newtheorem{proposition}[theorem]{Proposition}
\newtheorem{lemma}[theorem]{Lemma}
\theoremstyle{definition}
\newtheorem{example}[theorem]{Example}
\theoremstyle{remark}
\title[The Laplacian on Cartesian products]{The Laplacian on Cartesian products with mixed boundary conditions}
\subjclass[2010]{Primary 35J05; Secondary 46E35, 35J25}
\keywords{Tensor product of operators, mixed boundary conditions}
\date{}
\author[A.\ Seelmann]{Albrecht Seelmann}
\address{A.~Seelmann,
 Technische Univer\-si\-t\"at Dortmund, Fakult\"at f\"ur Mathematik, D-44221 Dortmund, Germany}
\email{albrecht.seelmann@math.tu-dortmund.de}
\begin{document}

\begin{abstract}
	A definition of the Laplacian on Cartesian products with mixed boundary conditions using quadratic forms is proposed. Its
	consistency with the standard definition for homogeneous and certain mixed boundary conditions is proved and, as a consequence,
	tensor representations of the corresponding Sobolev spaces of first order are derived. Moreover, a criterion for the domain to
	belong to the Sobolev space of second order is proved.
\end{abstract}

\maketitle

\section{Introduction}

For $j = 1, \dots, n$, $n \geq 2$, let $\Omega_j \subset \RR^{d_j}$ with $d_j \in \NN$ be open, and consider
$\Omega = \bigtimes_{j=1}^n \Omega_j \subset \RR^d$ with $d = d_1 + \dots + d_n$. For each $j$ let $\cH_j$ be a closed subspace
of $H^1(\Omega_j)$ that is dense in $L^2(\Omega_j)$; possible choices for $\cH_j$ are, for instance, $H^1(\Omega_j)$,
$H_0^1(\Omega_j)$, $H_\per^1(\Omega_j)$ (if applicable), and the subspace of $H^1(\Omega_j)$ corresponding to quasiperiodic
boundary conditions (if applicable).

For $f_j \colon \Omega_j \to \CC$, $f_j \in \cH_j$, the elementary tensor $f := \bigotimes_{j=1}^n f_j \colon \Omega \to \CC$
denotes the function in $H^1(\Omega)$ given by $f(x_1,\dots,x_n)=\prod_{j=1}^n f_j(x_j)$ with $x_j \in \Omega_j$. Set
\[
	\cK := \Span_\CC \biggl\{ \bigotimes_{j=1}^n f_j \colon f_j \in \cH_j \biggr\} \subset H^1(\Omega)
\]
and
\[
	\cH := \bigotimes_{j=1}^n \cH_j := \overline{\cK}^{H^1(\Omega)}
	\subset
	H^1(\Omega)
	,
\]
where the bar denotes the closure in $H^1(\Omega)$. For a more detailed discussion of tensor products of Hilbert spaces, the
reader is referred to~\cite[Section~7.5]{Schm12},~\cite[Section~3.4]{Wei80}, and~\cite[Section~II.4]{RS80}.

Consider on $L^2(\Omega)$ the form $\ft$ with $\Dom[\ft] = \cH$ and
\[
	\ft[ f , g ]
	=
	\sum_{k=1}^d \langle \partial_k f , \partial_k g \rangle_{L^2(\Omega)}
	,\quad
	f,g \in \cH
	.
\]
This form is closed, nonnegative, and densely defined. Thus, there exists a unique (nonnegative) self-adjoint operator $T$ on
$L^2(\Omega)$ with $\Dom(T) \subset \cH$ and
\[
	\ft[ f , g ]
	=
	\langle Tf , g \rangle_{L^2(\Omega)}
\]
for all $f \in \Dom(T)$ and $g \in \cH$.

The purpose of the present short note is to propose the above definition of $T$ as the definition of the negative Laplacian
$-\Delta$ on $L^2(\Omega)$ with mixed boundary conditions corresponding to the ones encoded in the $\cH_j$. To this end, we show
that this way to define $T$ is consistent with the definition of the Laplacian on $\Omega$ with homogeneous Dirichlet, Neumann,
and \linebreak(quasi-)periodic and also certain mixed boundary conditions. As a byproduct, this also establishes the tensor
representations $H_0^1(\Omega) = \bigotimes_{j=1}^n H_0^1(\Omega_j)$, $H^1(\Omega) = \bigotimes_{j=1}^n H^1(\Omega_j)$, and
$H_\per^1(\Omega) = \bigotimes_{j=1}^n H_\per^1(\Omega_j)$. The latter seem to be folklore, but the present author is not aware
of any suitable reference in the literature for the general setting above; the case $\Omega = \RR^d$ is clear, whereas the recent
work~\cite{AR16} establishes the first two representations (for $n=2$) only for certain bounded domains such as bounded Lipschitz
domains.

With the above notations, it is easy to see that
\begin{equation}\label{eq:tensorInd}
  \cH
  =
  \biggl( \bigotimes_{j=1}^{n-1} \cH_j \biggr) \otimes \cH_n
  ,
\end{equation}
so that we may restrict our considerations to the case $n = 2$ by induction. The rest of this note is now organized as follows:

In Section~\ref{sec:tensor}, we give an alternative characterization for $T$ in the case $n = 2$ using tensor products of
operators which makes the operator more accessible for the remaining considerations. Section~\ref{sec:consistency} then proves
the consistency of the above definition with the case of standard homogeneous boundary conditions, and tensor representations for
the corresponding Sobolev spaces of first order are derived. Also compatible mixed boundary conditions are considered here.
Finally, Section~\ref{sec:domain} is devoted to a criterion under which the domain of the operator $T$ belongs to $H^2(\Omega)$.
As an example, we discuss the particular case where each $\Omega_j$ is one dimensional.

\section{An alternative characterization using tensor products of operators}
\label{sec:tensor}

Throughout this section we consider the case $n = 2$, so that $\Omega = \Omega_1 \times \Omega_2$ and
$\cH = \cH_1 \otimes \cH_2$.

For $j \in \{ 1,2 \}$, let $\Delta_j$ be the Laplacian on $\Omega_j$ with form domain $\cH_j$, that is, $-\Delta_j$ is the unique
nonnegative self-adjoint operator on $L^2(\Omega_j)$ satisfying $\Dom(\Delta_j) \subset \cH_j$ and
\[
	\sum_{k=1}^{d_j} \langle \partial_k u , \partial_k v \rangle_{L^2(\Omega_j)}
	=
	\langle (-\Delta_j)u , v \rangle_{L^2(\Omega_j)}
\]
for all $u \in \Dom(\Delta_j)$ and $v \in \cH_j$. Consider the operator $S$ on $L^2(\Omega)$ defined by
\[
  S
  :=
  (-\Delta_1) \otimes I_2 + I_1 \otimes (-\Delta_2)
  =:
  S_1 + S_2
\]
where $I_j$ denotes the identity operator on $L^2(\Omega_j)$. According to~\cite[Theorem~7.23 and Exercise~7.17\,a.]{Schm12},
this operator is self-adjoint and nonnegative with operator core
$\cD := \Span_\CC\{ f_1 \otimes f_2 \colon f_j \in \Dom(\Delta_j) \}$;
cf.~also~\cite[Theorem~8.33]{Wei80},~\cite[Theorem~VIII.33]{RS80} and~\cite[Proposition~A.2]{MN12}.

The above gives an alternative definition of the Laplacian on $\Omega$ with mixed boundary conditions corresponding to the
$\cH_j$:
\begin{lemma}\label{lemma:LaplacianTensor}
	For $n = 2$ we have $T = S$.
\end{lemma}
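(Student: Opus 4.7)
The plan is to prove the lemma at the level of closed quadratic forms: if the closed nonnegative form $\mathfrak{s}$ associated with $S$ agrees with $\ft$, then $T = S$ by uniqueness of the representing self-adjoint operator. I would establish this in three steps.

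\emph{Step 1 (the forms agree on $\cD$).} For an elementary tensor $f = f_1 \otimes f_2 \in \cD$ (so $f_j \in \Dom(\Delta_j)$) and $g = g_1 \otimes g_2$ with $g_j \in \cH_j$, I would write $Sf = (-\Delta_1 f_1)\otimes f_2 + f_1 \otimes (-\Delta_2 f_2)$ and factor the $L^2(\Omega)$ inner product over elementary tensors to obtain
\[
	\mathfrak{s}[f,g] = \langle -\Delta_1 f_1,g_1\rangle_{L^2(\Omega_1)}\langle f_2,g_2\rangle_{L^2(\Omega_2)} + \langle f_1,g_1\rangle_{L^2(\Omega_1)}\langle -\Delta_2 f_2,g_2\rangle_{L^2(\Omega_2)}.
\]
Applying the defining form identity of $-\Delta_j$ to each factor and observing that for $k \leq d_1$ the partial $\partial_k$ acts only in the first $d_1$ variables while for $k > d_1$ it acts only in the remaining $d_2$, this equals $\sum_{k=1}^d \langle \partial_k f,\partial_k g\rangle_{L^2(\Omega)} = \ft[f,g]$. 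Sesquilinearity then extends the identity to all $f,g \in \cD$.

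\emph{Step 2 ($\cD$ is a form core for $\ft$).} The form norm of $\ft$ on $\cH$ is (the restriction of) the $H^1(\Omega)$-norm, so by the definition $\cH = \overline{\cK}^{H^1(\Omega)}$ it suffices to approximate each elementary tensor $f_1 \otimes f_2 \in \cK$ (with $f_j \in \cH_j$) in $H^1(\Omega)$ by elements of $\cD$. Since $\Dom(\Delta_j)$ is an operator core, and hence a form core, for $-\Delta_j$, it is $H^1(\Omega_j)$-dense in $\cH_j$; choose $f_j^{(m)} \in \Dom(\Delta_j)$ with $f_j^{(m)} \to f_j$ in $H^1(\Omega_j)$. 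The telescoping identity
\[
	f_1 \otimes f_2 - f_1^{(m)} \otimes f_2^{(m)} = (f_1 - f_1^{(m)}) \otimes f_2 + f_1^{(m)} \otimes (f_2 - f_2^{(m)})
\]
combined with the elementary product estimate $\norm{u \otimes v}_{H^1(\Omega)} \leq \norm{u}_{H^1(\Omega_1)}\norm{v}_{H^1(\Omega_2)}$ (immediate from direct computation of the $L^2$-norms of $u \otimes v$ and its weak partial derivatives) then yields $f_1^{(m)} \otimes f_2^{(m)} \to f_1 \otimes f_2$ in $H^1(\Omega)$.

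\emph{Step 3 (conclude).} The set $\cD$ is also a form core for $\mathfrak{s}$, since it is an operator core for the nonnegative self-adjoint operator $S$ (standard). Together with Step~2, both $\ft$ and $\mathfrak{s}$ are closed nonnegative forms admitting $\cD$ as a common core, and by Step~1 they agree on $\cD$; uniqueness of the closure forces $\ft = \mathfrak{s}$, hence $T = S$. The main technical hurdle is Step~2: one has to verify that the natural form topology on $\Dom[\ft]$ really is the $H^1(\Omega)$-topology and then lift the density of $\Dom(\Delta_j)$ in $\cH_j$ to density of $\cD$ in $\cH$, which is exactly where the multiplicative tensor inequality above enters.
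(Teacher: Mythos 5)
Your argument is correct, but it runs at the level of quadratic forms whereas the paper argues at the level of operators. The paper fixes $f \in \cD$, shows $\ft[f,g] = \langle Sf, g\rangle_{L^2(\Omega)}$ for all $g \in \cH$ (extending from $g \in \cK$ by the definitional $H^1$-density of $\cK$ in $\cH$), reads off from the representation theorem that $S|_\cD \subset T$, and then concludes $S = \overline{S|_\cD} = T$ because $\cD$ is an operator core for $S$ and both operators are self-adjoint. This bypasses your Step~2 entirely: the paper never needs $\cD$ to be $H^1$-dense in $\cH$, only $\cK$ to be so, which holds by definition of $\cH$. Your route instead identifies the two closed forms by exhibiting $\cD$ as a common form core; this costs you the telescoping/product-norm argument of Step~2, but yields as a byproduct the slightly stronger statement that $\cD$ is dense in $\cH$ in the $H^1(\Omega)$-norm, and it avoids invoking the maximality of self-adjoint operators. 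One small point of hygiene in Step~1: to write $\mathfrak{s}[f,g] = \langle Sf, g\rangle_{L^2(\Omega)}$ for $g = g_1 \otimes g_2$ with $g_j \in \cH_j$, you would first have to check that such $g$ lies in $\Dom[\mathfrak{s}] = \Dom(S^{1/2})$, which is not immediate from the definitions (it is essentially part of what is being proved). Since your Step~3 only uses the agreement of the two forms on $\cD \times \cD$, you can harmlessly restrict Step~1 to $g_j \in \Dom(\Delta_j)$, where the identity $\mathfrak{s}[f,g] = \langle Sf, g\rangle_{L^2(\Omega)}$ is automatic; with that adjustment the proof is complete.
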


\begin{proof}
  Let $f = f_1 \otimes f_2 \in \cD$ and $g = g_1 \otimes g_2 \in \cK$. By Fubini's theorem, we then have 
	\begin{align*}
		\ft[ f , g ]
		&=
		\sum_{j=1}^2 \sum_{k=1}^{d_j} \langle \partial_k f_j , \partial_k g_j \rangle_{L^2(\Omega_j)} \prod_{l \neq j}
			\langle f_l , g_l \rangle_{L^2(\Omega_l)}\\
		&=
    \langle (-\Delta_1) f_1 , g_1 \rangle_{L^2(\Omega_1)} \langle f_2 , g_2 \rangle_{L^2(\Omega_2)}
    +
    \langle f_1 , g_1 \rangle_{L^2(\Omega_1)} \langle (-\Delta_2) f_2 , g_2 \rangle_{L^2(\Omega_2)}\\
		&=
		\langle S_1 f , g \rangle_{L^2(\Omega)} + \langle S_2 f , g \rangle_{L^2(\Omega)} = \langle S f , g \rangle_{L^2(\Omega)}
		.
	\end{align*}
	By sesquilinearity and denseness of $\cK$ in $\cH$ with respect to the $H^1$-norm, this yields
	\[
		\ft[ f , g ]
		=
		\langle Sf , g \rangle_{L^2(\Omega)}
	\]
	for all $f \in \cD \subset \cH$ and $g \in \cH$. Hence, $\cD \subset \Dom(T)$ and $Tf = Sf$ for all $f \in \cD$, that is,
	$S|_\cD \subset T$. Since $\cD$ is a core for $S$ and both $S$ and $T$ are self-adjoint, we conclude that
	$S = \overline{S|_\cD} = T$.
\end{proof}%

The above can be considered as a technical preliminary as it makes the operator $T$ more accessible for some of our purposes.

\section{Consistency with homogeneous and mixed boundary conditions of product type}
\label{sec:consistency}

In this section, we show that the definition of $T$ (with general $n \geq 2$) agrees with the usual definition of the Laplacian
in the case where homogeneous boundary conditions (Dirichlet, Neumann, (quasi-)periodic) are imposed on $\Omega$, that is, where
all $\cH_j$ are chosen of the same type. We also discuss the case of certain mixed boundary conditions compatible with the
product structure.

\subsection{Dirichlet boundary conditions}

We denote the Dirichlet Laplacian on $\Omega$ by $\Delta_\Omega^D$.

\begin{proposition}\label{prop:consistencyDirichlet}
	If $\cH_j = H_0^1(\Omega_j)$ for each $j$, then $T = -\Delta_\Omega^D$. In particular, we have
	$H_0^1(\Omega) = \bigotimes_{j=1}^n H_0^1(\Omega_j)$.
\end{proposition}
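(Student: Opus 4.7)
The plan is to reduce to $n=2$ via (\ref{eq:tensorInd}) and then to establish the equality of form domains $\cH = H_0^1(\Omega)$. Once this is shown, both $T$ and $-\Delta_\Omega^D$ are the self-adjoint operator associated to the same closed form (the gradient form on $H_0^1(\Omega)$), so the first representation theorem forces $T = -\Delta_\Omega^D$, and the tensor representation $H_0^1(\Omega) = \bigotimes_{j=1}^n H_0^1(\Omega_j)$ is just the statement $\cH = H_0^1(\Omega)$ unravelled through the induction on $n$.

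For the inclusion $\cH \subset H_0^1(\Omega)$, I would take an elementary tensor $f_1 \otimes f_2$ with $f_j \in H_0^1(\Omega_j)$, pick approximants $\varphi_j^{(m)} \in C_c^\infty(\Omega_j)$ with $\varphi_j^{(m)} \to f_j$ in $H^1(\Omega_j)$, and observe that $\varphi_1^{(m)} \otimes \varphi_2^{(m)} \in C_c^\infty(\Omega) \subset H_0^1(\Omega)$ converges to $f_1 \otimes f_2$ in $H^1(\Omega)$ by a short product-rule estimate (the derivative of a tensor is a sum of two tensors, each with one factor converging in $H^1$ and the other in $L^2$). Since $H_0^1(\Omega)$ is closed in $H^1(\Omega)$, it then contains $\cK$ and therefore its $H^1$-closure $\cH$.

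The main difficulty is the reverse inclusion $H_0^1(\Omega) \subset \cH$. Since $C_c^\infty(\Omega)$ is dense in $H_0^1(\Omega)$ and $\cH$ is $H^1$-closed, it suffices to place $C_c^\infty(\Omega)$ inside $\cH$. Given $\varphi \in C_c^\infty(\Omega_1 \times \Omega_2)$ supported in a compact product $K_1 \times K_2$, I would fix cutoffs $\chi_j \in C_c^\infty(\Omega_j)$ equal to one on a neighborhood of $K_j$, enclose $K_1 \times K_2$ in a compact box inside $\Omega_1 \times \Omega_2$, extend $\varphi$ by zero to a smooth function on the torus obtained from that box, and Fourier-expand on this torus. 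The partial sums $P_m$ are trigonometric polynomials of product type $\sum_i \alpha_i(x_1)\beta_i(x_2)$ that converge to $\varphi$ in every $C^k$-norm by the rapid decay of Fourier coefficients of smooth functions, so the cut-off approximants $\chi_1(x_1)\chi_2(x_2) P_m$ are finite sums in $\cK$ converging in $C^1$, hence in $H^1(\Omega)$, to $\chi_1\chi_2\varphi = \varphi$. The technical crux is this $C^1$-convergence of a separation-of-variables expansion, which is precisely what Stone--Weierstrass alone does not give and where the smoothness of $\varphi$ (via the torus Fourier argument) is used. Once the $n=2$ case is in place, the general $n$ case follows by induction through (\ref{eq:tensorInd}) applied to $\Omega_1 \times \cdots \times \Omega_{n-1}$ and $\Omega_n$.
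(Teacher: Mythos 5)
Your argument is correct, but it runs in the opposite direction from the paper's. The paper never proves the form-domain identity $\cH = H_0^1(\Omega)$ directly; instead it works at the operator level: using Lemma~\ref{lemma:LaplacianTensor} it identifies $T$ with $S = (-\Delta_1)\otimes I_2 + I_1\otimes(-\Delta_2)$, shows by Fubini and integration by parts that $\langle Sf,g\rangle = \sum_k\langle\partial_k f,\partial_k g\rangle$ for $f\in\cD$ and $g\in C_c^\infty(\Omega)$ (hence for all $g\in H_0^1(\Omega)$ by density), concludes $S|_\cD\subset -\Delta_\Omega^D$ using only the \emph{easy} inclusion $\cD\subset H_0^1(\Omega)$, and then invokes the fact that a self-adjoint operator admits no proper self-adjoint extension of an essentially self-adjoint restriction to get $S=\overline{S|_\cD}=-\Delta_\Omega^D$. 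The equality of form domains, i.e.\ the tensor representation of $H_0^1(\Omega)$, is then read off as a corollary. You instead prove $\cH = H_0^1(\Omega)$ first and let the first representation theorem do the rest; the price is that you must establish the hard inclusion $C_c^\infty(\Omega)\subset\cH$, which is exactly the density-of-separated-functions lemma the paper's operator-theoretic detour is designed to avoid. Your torus Fourier-series argument for that lemma is sound: the projections of $\operatorname{supp}\varphi$ give compact $K_j\subset\Omega_j$, the partial sums converge in every $C^k$-norm, and the cutoffs $\chi_1\otimes\chi_2$ place the separated approximants in $\cK$ with uniformly compact support, so $C^1$-convergence upgrades to $H^1$-convergence. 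One small correction: the enclosing box need not (and in general cannot) be chosen \emph{inside} $\Omega_1\times\Omega_2$ — if $\Omega_1$ has a hole encircled by $K_1$, no box containing $K_1$ lies in $\Omega_1$. This is harmless: since $\varphi$ extended by zero is smooth on all of $\RR^d$, any box in $\RR^d$ whose interior contains $\operatorname{supp}\chi_1\times\operatorname{supp}\chi_2$ will do, the exponentials being globally defined and the cutoffs alone ensuring membership in $C_c^\infty(\Omega_j)$. With that adjustment your proof is complete and, unlike the paper's, yields the folklore identity $H_0^1(\Omega)=\bigotimes_j H_0^1(\Omega_j)$ as the primary object rather than as a byproduct.
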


\begin{proof}
  In view of~\eqref{eq:tensorInd}, it suffices to consider the case $n = 2$ by induction.
  
	Let $f = f_1 \otimes f_2$ with $f_j \in \Dom(\Delta_j)$, and let $g \in C_c^\infty(\Omega)$. We then have
	$h := g(\cdot,x_2) \in C_c^\infty(\Omega_1)$ for every $x_2 \in \Omega_2$, and, with $S_1$ as in Section~\ref{sec:tensor} and
	using Fubini's theorem, we obtain
	\begin{align*}
		\langle S_1 f , g \rangle_{L^2(\Omega)}
		&=
		\int_{\Omega_2} f_2(x_2) \langle (-\Delta_1)f_1 , h \rangle_{L^2(\Omega_1)} \,\dd x_2\\
		&=
		\sum_{k=1}^{d_1} \int_{\Omega_2} f_2(x_2) \langle \partial_k f_1 , \partial_k h \rangle	_{L^2(\Omega_1)} \,\dd x_2\\
		&=
		\sum_{k=1}^{d_1} \langle \partial_k f , \partial_k g \rangle_{L^2(\Omega)}
		.
	\end{align*}
	In the same way we see that
	\[
		\langle S_2 f , g \rangle_{L^2(\Omega)}
		=
		\sum_{k=d_1+1}^{d_1+d_2} \langle \partial_k f , \partial_k g \rangle_{L^2(\Omega)}
		,
	\]
	and summing up gives
	\[
		\langle Sf , g \rangle_{L^2(\Omega)}
		=
		\sum_{k=1}^d \langle \partial_k f , \partial_k g \rangle_{L^2(\Omega)}
		.
	\]
	By sesquilinearity and denseness of $C_c^\infty(\Omega)$ in $H_0^1(\Omega)$, the latter extends to all
	$f \in \cD \subset H_0^1(\Omega_1) \otimes H_0^1(\Omega_2) \subset H_0^1(\Omega)$ and $g \in H_0^1(\Omega)$, which implies that
	$S|_\cD \subset -\Delta_\Omega^D$. Since $\cD$	is a core for $S$ and $S$ and $-\Delta_\Omega^D$ are self-adjoint, we conclude
	that $S = \overline{S|_\cD} = -\Delta_\Omega^D$. In view of Lemma~\ref{lemma:LaplacianTensor}, this completes the proof of the
	identity $T = -\Delta_\Omega^D$.
	
	Finally, the claimed tensor representation of $H_0^1(\Omega)$ follows since with $T = -\Delta_\Omega^D$ also the form domains
	of $T$ and $-\Delta_\Omega^D$ agree.
\end{proof}%

\subsection{Neumann boundary conditions}

We denote the Neumann Laplacian on $\Omega$ by $\Delta_\Omega^N$.

\begin{proposition}\label{prop:consistencyNeumann}
	If $\cH_j = H^1(\Omega_j)$ for each $j$, then $T = -\Delta_\Omega^N$. In particular, we have
	$H^1(\Omega) = \bigotimes_{j=1}^n H^1(\Omega_j)$.
\end{proposition}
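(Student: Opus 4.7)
The plan is to mimic the Dirichlet argument of Proposition~\ref{prop:consistencyDirichlet}. By~\eqref{eq:tensorInd} I reduce to $n = 2$ by induction, and by Lemma~\ref{lemma:LaplacianTensor} I have $T = S = S_1 + S_2$ with $-\Delta_j = -\Delta_{\Omega_j}^N$. It then suffices to show the operator inclusion $S|_\cD \subset -\Delta_\Omega^N$: since $\cD$ is a core for $S$ and both operators are self-adjoint, they must agree, and the coincidence of their form domains will automatically yield $\cH = \bigotimes_{j=1}^{n} H^1(\Omega_j) = H^1(\Omega)$.

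The essential complication compared with the Dirichlet case is that $C_c^\infty(\Omega)$ is \emph{not} dense in the form domain $H^1(\Omega)$ of $-\Delta_\Omega^N$, so I cannot test against $g \in C_c^\infty(\Omega)$ and extend by density. Instead, I would establish
\[
  \sum_{k=1}^d \langle \partial_k f , \partial_k g \rangle_{L^2(\Omega)} = \langle Sf, g \rangle_{L^2(\Omega)}
\]
directly for arbitrary $g \in H^1(\Omega)$ and $f = f_1 \otimes f_2$ with $f_j \in \Dom(\Delta_{\Omega_j}^N)$. The key ingredient is a slicing lemma: for $g \in H^1(\Omega_1 \times \Omega_2)$ and almost every $x_2 \in \Omega_2$, the slice $g(\cdot,x_2)$ lies in $H^1(\Omega_1)$ with weak derivatives $(\partial_k g)(\cdot,x_2)$ for $k = 1,\dots,d_1$, and symmetrically for the other variable.

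Granted this slicing lemma, Fubini rewrites $\sum_{k=1}^{d_1}\langle \partial_k f, \partial_k g\rangle_{L^2(\Omega)}$ as $\int_{\Omega_2} f_2(x_2)\sum_{k=1}^{d_1} \langle \partial_k f_1, \partial_k g(\cdot,x_2)\rangle_{L^2(\Omega_1)}\,\dd x_2$. Since $g(\cdot,x_2) \in H^1(\Omega_1) = \cH_1$ is a legitimate test vector for the defining form identity of $-\Delta_{\Omega_1}^N$, the inner sum equals $\langle (-\Delta_{\Omega_1}^N) f_1, g(\cdot,x_2)\rangle_{L^2(\Omega_1)}$, and a second application of Fubini turns the resulting iterated integral into $\langle S_1 f, g\rangle_{L^2(\Omega)}$. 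The symmetric computation handles the indices $k = d_1+1,\dots,d$ and produces $\langle S_2 f, g\rangle_{L^2(\Omega)}$. Summing and extending by sesquilinearity to $f \in \cD$ gives $S|_\cD \subset -\Delta_\Omega^N$, and the self-adjointness argument above then concludes.

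The hard part is thus the slicing lemma; it is well known, but absent any regularity assumption on the $\partial \Omega_j$ I would give a short self-contained proof by testing the weak-derivative identity for $g$ against products $\varphi \otimes \psi$ with $\varphi \in C_c^\infty(\Omega_1)$ and $\psi \in C_c^\infty(\Omega_2)$, applying Fubini to obtain the one-dimensional integration-by-parts identity for a.e.\ $x_2$, and pinning down a common exceptional $x_2$-null set by letting $\varphi$ range over a countable family that is dense in $C_c^\infty(\Omega_1)$ in a sufficiently strong topology (e.g.\ $H^1$).
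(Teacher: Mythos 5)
Your proposal is correct and follows essentially the same route as the paper: reduce to $n=2$, use Lemma~\ref{lemma:LaplacianTensor}, and run the Fubini computation from the Dirichlet case directly against an arbitrary $g \in H^1(\Omega)$, justified by the fact that the slices $g(\cdot,x_2)$ and $g(x_1,\cdot)$ lie in $H^1(\Omega_1)$ and $H^1(\Omega_2)$ for almost every value of the other variable. The only difference is that you supply a (correct) sketch of the slicing lemma, which the paper simply asserts.
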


\begin{proof}
  It again suffices to consider the case $n=2$.
  
	For $g \in H^1(\Omega)$, we have $g(\cdot,x_2) \in H^1(\Omega_1)$ for almost every $x_2 \in \Omega_2$ and
	$g(x_1,\cdot) \in H^1(\Omega_2)$ for almost every $x_1 \in \Omega_1$. As in the proof of
	Proposition~\ref{prop:consistencyDirichlet}, we therefore obtain
	\[
		\langle Sf , g \rangle_{L^2(\Omega)}
		=
		\sum_{k=1}^d \langle \partial_k f , \partial_k g \rangle_{L^2(\Omega)}
	\]	
	for all $f \in \cD \subset H^1(\Omega_1) \otimes H^1(\Omega_2) \subset H^1(\Omega)$ and all $g \in H^1(\Omega)$; here, we do not
	need to approximate $g$ but may directly	work with $g \in H^1(\Omega)$. The rest of the proof then works exactly as in the above
	proof.
\end{proof}%

\subsection{Periodic boundary conditions}

If $\Omega$ is a hyperrectangle, we denote the corresponding Laplacian with periodic boundary conditions by $\Delta_\Omega^\per$.

\begin{proposition}\label{prop:consistencyPeriodic}
	For each $j = 1,\dots,n$, let $\Omega_j$ be a hyperrectangle in $\RR^{d_j}$ and
	$\cH_j = H_\per^1(\Omega_j) := \overline{C_\per^\infty(\Omega_j)}^{H^1(\Omega_j)}$. Then $T = -\Delta_\Omega^\per$. In
	particular, we have $H_\per^1(\Omega) = \bigotimes_{j=1}^n H_\per^1(\Omega_j)$.
\end{proposition}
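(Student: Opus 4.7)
The plan is to mirror the proof of Proposition~\ref{prop:consistencyDirichlet} verbatim, with $C_c^\infty(\Omega)$ replaced throughout by $C_\per^\infty(\Omega)$. First, by~\eqref{eq:tensorInd} and induction it suffices to treat the case $n = 2$; since $\Omega_1$ and $\Omega_2$ are hyperrectangles, so is $\Omega = \Omega_1 \times \Omega_2$, hence $H_\per^1(\Omega)$ and $\Delta_\Omega^\per$ are well defined.

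The key new input is a slicing observation: for $g \in C_\per^\infty(\Omega)$ and every fixed $x_2 \in \Omega_2$, the function $g(\cdot,x_2)$ belongs to $C_\per^\infty(\Omega_1) \subset H_\per^1(\Omega_1)$, and analogously for slices in the second variable. This is immediate because fixing the last $d_2$ coordinates of a function periodic in all $d_1 + d_2$ coordinates leaves a function periodic in the first $d_1$. With this in hand, the computation of Proposition~\ref{prop:consistencyDirichlet} carries over directly: for $f = f_1 \otimes f_2 \in \cD$ (so $f_j \in \Dom(\Delta_j)$ with $-\Delta_j$ now the periodic Laplacian on $\Omega_j$) and $g \in C_\per^\infty(\Omega)$, Fubini's theorem together with the identity $\langle (-\Delta_1) f_1, g(\cdot,x_2) \rangle_{L^2(\Omega_1)} = \sum_{k=1}^{d_1} \langle \partial_k f_1, \partial_k g(\cdot,x_2) \rangle_{L^2(\Omega_1)}$ — which is legitimate precisely because $g(\cdot,x_2) \in H_\per^1(\Omega_1)$ — will give $\langle S_1 f, g\rangle_{L^2(\Omega)} = \sum_{k=1}^{d_1} \langle \partial_k f, \partial_k g \rangle_{L^2(\Omega)}$, symmetrically for $S_2$, and hence $\langle Sf, g\rangle_{L^2(\Omega)} = \sum_{k=1}^d \langle \partial_k f, \partial_k g\rangle_{L^2(\Omega)}$.

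By sesquilinearity and the density of $C_\per^\infty(\Omega)$ in $H_\per^1(\Omega)$ built into the definition, this identity will extend to all $g \in H_\per^1(\Omega)$. To then conclude $S|_\cD \subset -\Delta_\Omega^\per$ I additionally need $\cD \subset H_\per^1(\Omega)$; for this I would approximate each $f_j \in \Dom(\Delta_j) \subset H_\per^1(\Omega_j)$ in the $H^1(\Omega_j)$-norm by functions in $C_\per^\infty(\Omega_j)$, observe that the resulting elementary tensor products lie in $C_\per^\infty(\Omega) \subset H_\per^1(\Omega)$ and converge to $f_1 \otimes f_2$ in $H^1(\Omega)$, and invoke that $H_\per^1(\Omega)$ is $H^1$-closed by definition.

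Once $S|_\cD \subset -\Delta_\Omega^\per$ is established, the core property of $\cD$ for $S$ together with the self-adjointness of both $S$ and $-\Delta_\Omega^\per$ will yield $S = \overline{S|_\cD} = -\Delta_\Omega^\per$; combined with Lemma~\ref{lemma:LaplacianTensor} this gives $T = -\Delta_\Omega^\per$, and the tensor representation $H_\per^1(\Omega) = \bigotimes_{j=1}^n H_\per^1(\Omega_j)$ follows from the resulting equality of form domains. The only genuinely new ingredient compared with the Dirichlet and Neumann cases is the slicing remark for $C_\per^\infty(\Omega)$, which is essentially trivial; I do not expect a substantial obstacle.
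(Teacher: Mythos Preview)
Your proposal is correct and follows essentially the same approach as the paper: reduce to $n=2$, note the slicing property $g(\cdot,x_2)\in C_\per^\infty(\Omega_1)$ (and symmetrically) for $g\in C_\per^\infty(\Omega)$, and then rerun the argument of Proposition~\ref{prop:consistencyDirichlet} with $C_c^\infty$ replaced by $C_\per^\infty$. The paper's own proof is in fact just a two-line pointer to exactly this, so your write-up simply spells out the details a bit more fully (in particular the justification of $\cD\subset H_\per^1(\Omega)$, which the paper leaves implicit via $\cD\subset H_\per^1(\Omega_1)\otimes H_\per^1(\Omega_2)\subset H_\per^1(\Omega)$).
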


\begin{proof}
  For $n = 2$ and $g \in C_\per^\infty(\Omega)$ we have $g(\cdot,x_2) \in C_\per^\infty(\Omega_1)$ for all $x_2 \in \Omega_2$ and
  $g(x_1,\cdot) \in C_\per^\infty(\Omega_2)$ for all $x_1 \in \Omega_1$. We may now proceed in the same way as in the proof of
  Proposition~\ref{prop:consistencyDirichlet}.
\end{proof}%

Quasiperiodic boundary conditions work analogously.

\subsection{Mixed boundary conditions of product type}

For each $j$, let $\Gamma_j$ be a closed subset of $\partial\Omega_j$, and define the closed subset
$\Gamma \subset \partial\Omega$ by
\[
	\Gamma
	:=
	\bigl( \Gamma_1 \times \overline{\Omega}_2 \times \dots \times \overline{\Omega}_n \bigr) \cup \dots \cup
	\bigl( \overline{\Omega}_1 \times \dots \times \overline{\Omega}_{n-1} \times \Gamma_n \bigr)
	.
\]
Let
\[
	V
	:=
	\overline{ \{ u|_\Omega \colon u \in C_c^\infty(\RR^d\setminus\Gamma) \} }^{H^1(\Omega)}
	,
\]
and denote by $\Delta_\Omega^V$ the Laplacian on $\Omega$ with form domain $V$. Following~\cite[Section~4.1]{Ouh05}, this
construction can be interpreted as to encode Dirichlet boundary conditions on $\Gamma$ and Neumann boundary conditions on the
rest of the boundary.

\begin{proposition}\label{prop:consistencyMixed}
	If $\cH_j = \overline{ \{ u|_{\Omega_j} \colon u \in C_c^\infty(\RR^{d_j}\setminus\Gamma_j)\} }^{H^1(\Omega_j)}$ for each $j$,
	then $T = -\Delta_\Omega^V$ and $V = \bigotimes_{j=1}^n \cH_j$.
\end{proposition}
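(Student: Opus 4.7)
The plan is to reduce to $n=2$ by induction via \eqref{eq:tensorInd} and then to establish the tensor representation $V=\cH_1\otimes\cH_2$; once that is in hand, the identity $T=-\Delta_\Omega^V$ follows automatically, as both operators are the self-adjoint operator associated with the same closed nonnegative form with the same domain. The inductive step is valid because $\Gamma$ itself decomposes as $(\Gamma'\times\overline{\Omega}_n)\cup(\overline{\Omega}'\times\Gamma_n)$ with the obvious $\Gamma'\subset\partial\Omega'$ and $\Omega':=\Omega_1\times\cdots\times\Omega_{n-1}$, matching the $n=2$ structure, and by induction hypothesis $\bigotimes_{j<n}\cH_j$ is already the corresponding space $V'$ on $\Omega'$.

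For the inclusion $\cH_1\otimes\cH_2\subset V$, if $u_j\in C_c^\infty(\RR^{d_j}\setminus\Gamma_j)$ for $j=1,2$, then the product $u_1\otimes u_2\in C_c^\infty(\RR^d)$ has support $\operatorname{supp}u_1\times\operatorname{supp}u_2$, which is disjoint from both $\Gamma_1\times\overline{\Omega}_2$ and $\overline{\Omega}_1\times\Gamma_2$, hence from $\Gamma$. So $u_1\otimes u_2\in C_c^\infty(\RR^d\setminus\Gamma)$ and its restriction to $\Omega$ lies in $V$; linearity and the $H^1$-closedness of $V$ then yield the inclusion.

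For the reverse inclusion, which is the main step, I take $u\in C_c^\infty(\RR^d\setminus\Gamma)$ and construct a finite open cover of $\operatorname{supp}u$ by product sets $U_{1,\alpha}\times U_{2,\alpha}\subset\RR^d\setminus\Gamma$. At each $p=(p_1,p_2)\in\operatorname{supp}u$ such a neighborhood exists via three cases: if $p_j\notin\Gamma_j$ for both $j$, pick $U_j\ni p_j$ with $\overline{U_j}\cap\Gamma_j=\emptyset$; if $p_1\in\Gamma_1$, then $p\notin\Gamma$ forces $p_2\notin\overline{\Omega}_2$, so take $U_2\ni p_2$ with $\overline{U_2}\cap\overline{\Omega}_2=\emptyset$ (which also yields $\overline{U_2}\cap\Gamma_2=\emptyset$ since $\Gamma_2\subset\overline{\Omega}_2$) and $U_1$ arbitrary, and symmetrically for $p_2\in\Gamma_2$. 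Choosing a subordinate smooth partition of unity $\{\chi_\alpha\}$, I write $u=\sum_\alpha\chi_\alpha u$. The second- and third-case summands are supported in products whose second or first factor is disjoint from $\overline{\Omega}_2$ or $\overline{\Omega}_1$, hence vanish identically on $\Omega$. Each first-case summand satisfies $\chi_\alpha u\in C_c^\infty(U_{1,\alpha}\times U_{2,\alpha})$ with $\overline{U_{j,\alpha}}\cap\Gamma_j=\emptyset$, and the classical density of $C_c^\infty(U_{1,\alpha})\otimes C_c^\infty(U_{2,\alpha})$ in $C_c^\infty(U_{1,\alpha}\times U_{2,\alpha})$ in the $C^1$-norm on a common compact support yields $H^1(\Omega)$-approximations of $(\chi_\alpha u)|_\Omega$ by finite sums of elementary tensors with factors in $C_c^\infty(\RR^{d_j}\setminus\Gamma_j)$, so $(\chi_\alpha u)|_\Omega\in\cH_1\otimes\cH_2$. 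Summing over $\alpha$ gives $u|_\Omega\in\cH_1\otimes\cH_2$ and, after $H^1$-closure, $V\subset\cH_1\otimes\cH_2$. The main obstacle is the product-shaped covering step; the ensuing tensor-product approximation of smooth functions on a product open set is standard.
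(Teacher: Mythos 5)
Your proof is correct, but it takes a genuinely different route from the paper's. You attack the hard inclusion $V \subset \cH_1 \otimes \cH_2$ head-on: cover $\operatorname{supp} u$ by product neighborhoods contained in $\RR^d\setminus\Gamma$ (your three-case analysis is sound --- in particular, $p_1 \in \Gamma_1$ forces $p_2 \notin \overline{\Omega}_2$, so those pieces vanish on $\Omega$), localize with a partition of unity, and approximate each surviving piece in $C^1$ by finite sums of elementary tensors; once $V = \cH_1 \otimes \cH_2$ is established, $T = -\Delta_\Omega^V$ is immediate since both operators are associated with the same form on the same domain. The paper goes in the opposite direction: it proves only the easy inclusion $\cH_1 \otimes \cH_2 \subset V$ (as you do), and then, rather than proving the reverse inclusion, observes that the slices $u(\cdot,x_2)$ and $u(x_1,\cdot)$ of $u \in C_c^\infty(\RR^d\setminus\Gamma)$ lie in $C_c^\infty(\RR^{d_1}\setminus\Gamma_1)$ and $C_c^\infty(\RR^{d_2}\setminus\Gamma_2)$ respectively, so the Fubini computation from the Dirichlet case yields $S|_\cD \subset -\Delta_\Omega^V$; self-adjointness of both operators together with $\cD$ being a core for $S$ then forces $S = -\Delta_\Omega^V$, and $V = \cH_1 \otimes \cH_2$ falls out afterwards as equality of form domains. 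The paper's route is shorter and avoids the covering and tensor-approximation machinery entirely; yours is more constructive, proves the Sobolev identity without reference to any operator, and makes explicit a density fact the paper obtains only indirectly. Both arguments handle the induction on $n$ identically, via the decomposition $\Gamma = (\Gamma'\times\overline{\Omega}_n)\cup(\overline{\Omega}'\times\Gamma_n)$.
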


\begin{proof}
	It again suffices to consider the case $n = 2$.
	
	For $u \in C_c^\infty(\RR^d\setminus\Gamma)$, it is not hard to see that
	$u(\cdot,x_2) \in C_c^\infty(\RR^{d_1}\setminus\Gamma_1)$ for all $x_2 \in \Omega_2$ and
	$u(x_1,\cdot) \in C_c^\infty(\RR^{d_2}\setminus\Gamma_2)$ for all $x_1 \in \Omega_1$. Since also
	$(\RR^{d_1}\setminus\Gamma_1) \times (\RR^{d_2}\setminus\Gamma_2) \subset \RR^d\setminus\Gamma$ and, therefore,
	$\cH_1 \otimes \cH_2 \subset V$, we may now proceed just as before to conclude the claim.
\end{proof}%

For $n = 2$ and the choices $\Gamma_1 = \partial\Omega_1$ and $\Gamma_2 = \emptyset$, we have $\cH_1 = H_0^1(\Omega_1)$ and,
provided that $\Omega_2$ has the so-called~\emph{segment property}, $\cH_2 = H^1(\Omega_2)$, see,
e.g.,~\cite[Theorem~3.22]{AF03}; cf.~also~\cite[Corollary~9.8]{Bre11}. For this situation,
Proposition~\ref{prop:consistencyMixed} shows that the proposed definition of the operator $T$ is consistent with the established
one with this kind of mixed boundary conditions. However, if $\Omega_2$ does not have the segment property and
$\overline{ \{ u|_{\Omega_2} \colon u \in C_c^\infty(\RR^{d_2})\} }^{H^1(\Omega_2)} \subsetneq H^1(\Omega_2)$, for the choices
$\cH_1 = H_0^1(\Omega_1)$ and $\cH_2 = H^1(\Omega_2)$ the present author is not aware of any established way of defining the
operator to compare with.

\section{Domain in \texorpdfstring{$H^2$}{H2} and an example}
\label{sec:domain}

In this last section, we prove a criterion in terms of the $\Delta_j$ under which the domain of $T$ belongs to $H^2(\Omega)$.

\begin{proposition}\label{prop:domain}
  Suppose that for $j = 1, \dots, n$ we have $\Dom(\Delta_j) \subset H^2(\Omega)$ with
  \begin{equation}\label{eq:domain}
    \sum_{\abs{\alpha}=2} \frac{1}{\alpha!} \langle \partial^\alpha u ,  \partial^\alpha v \rangle_{L^2(\Omega_j)}
    =
    \frac{1}{2} \langle \Delta_j u , \Delta_j v \rangle_{L^2(\Omega_j)}
  \end{equation}
  for all $u,v \in \Dom(\Delta_j)$. Then also $\Dom(T) \subset H^2(\Omega)$ with
  \[
    \sum_{\abs{\alpha}=2} \frac{1}{\alpha!} \langle \partial^\alpha f ,  \partial^\alpha g \rangle_{L^2(\Omega)}
    =
    \frac{1}{2} \langle Tg, Tg \rangle_{L^2(\Omega_j)}
  \]
  for all $f,g \in \Dom(T)$.
\end{proposition}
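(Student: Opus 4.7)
The plan is to reduce to $n=2$ by induction, using~\eqref{eq:tensorInd}: viewing $T$ as operating on the product $\Omega' \times \Omega_n$ with $\Omega' = \Omega_1 \times \cdots \times \Omega_{n-1}$ and form domain $\cH' \otimes \cH_n$ where $\cH' = \bigotimes_{j=1}^{n-1}\cH_j$, Lemma~\ref{lemma:LaplacianTensor} (applied with the first $n-1$ factors grouped) identifies $T$ with $T' \otimes I_n + I' \otimes (-\Delta_n)$, where $T'$ is the analogous operator on $L^2(\Omega')$. The induction hypothesis delivers both $\Dom(T') \subset H^2(\Omega')$ and the claimed Bochner-type identity for $T'$, so that applying the $n=2$ case to the pair $(T',-\Delta_n)$ in place of $(-\Delta_1,-\Delta_2)$ closes the induction.

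For $n=2$ I would first establish the identity on the operator core $\cD = \Span_\CC\{f_1 \otimes f_2 \colon f_j \in \Dom(\Delta_j)\}$ from Section~\ref{sec:tensor}. On an elementary tensor $f = f_1 \otimes f_2$, the multi-indices $\alpha$ with $|\alpha|=2$ split into three groups according to whether both derivatives act in $\Omega_1$, both act in $\Omega_2$, or one acts in each factor, yielding respectively $(\partial^{\alpha_1}f_1)\otimes f_2$, $f_1\otimes(\partial^{\alpha_2}f_2)$, and $(\partial_k f_1)\otimes(\partial_l f_2)$; note that the mixed multi-indices have $\alpha! = 1$. Applying~\eqref{eq:domain} to $f_1$ and $f_2$ separately for the two pure-type contributions, and using the form identity $\sum_{k=1}^{d_j}\langle\partial_k u,\partial_k v\rangle_{L^2(\Omega_j)} = \langle -\Delta_j u, v\rangle_{L^2(\Omega_j)}$ (together with Fubini) for the mixed contribution, a direct computation produces
\[
\tfrac{1}{2}\langle \Delta_1 f_1,\Delta_1 g_1\rangle\langle f_2,g_2\rangle + \langle -\Delta_1 f_1,g_1\rangle\langle -\Delta_2 f_2,g_2\rangle + \tfrac{1}{2}\langle f_1,g_1\rangle\langle \Delta_2 f_2,\Delta_2 g_2\rangle
\]
for the sum on the left-hand side. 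Expanding $\tfrac{1}{2}\langle Tf,Tg\rangle_{L^2(\Omega)}$ via $Tf = (-\Delta_1 f_1)\otimes f_2 + f_1\otimes(-\Delta_2 f_2)$ yields the same expression, with self-adjointness of $\Delta_1$ and $\Delta_2$ used to reduce both cross terms to $\langle -\Delta_1 f_1,g_1\rangle\langle -\Delta_2 f_2,g_2\rangle$. Sesquilinearity then extends the identity to all of $\cD$.

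Finally, to pass from $\cD$ to $\Dom(T)$, I would note that the identity for $f=g$, combined with $\sum_{k=1}^d \|\partial_k f\|_{L^2(\Omega)}^2 = \langle Tf,f\rangle_{L^2(\Omega)}$, provides the a priori bound $\|f\|_{H^2(\Omega)}^2 \lesssim \|f\|_{L^2(\Omega)}^2 + \|Tf\|_{L^2(\Omega)}^2$ on $\cD$. Since $\cD$ is a core for $T$, any $f \in \Dom(T)$ is a graph-norm limit of a sequence $(f_m) \subset \cD$; the estimate forces $(f_m)$ to be Cauchy in $H^2(\Omega)$, so completeness of $H^2(\Omega)$ together with uniqueness of $L^2$-limits gives $f \in H^2(\Omega)$, and the bilinear identity extends to $\Dom(T)$ by continuity. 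I expect the main obstacle to be the bookkeeping of the three-way decomposition and matching the two cross terms of $\langle Tf,Tg\rangle$ to the single mixed contribution via self-adjointness; once that is settled, the closure step is routine.
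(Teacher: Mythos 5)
Your proposal is correct and follows essentially the same route as the paper: the same reduction to $n=2$ via~\eqref{eq:tensorInd}, the same three-way splitting of the multi-indices $\alpha=(\beta,\gamma)$ with~\eqref{eq:domain} applied factorwise and the form identity handling the mixed block, and the same matching with $\tfrac12\langle Tf,Tg\rangle$ via $S=S_1+S_2$. Your closure step merely makes explicit the a priori bound and Cauchy argument that the paper compresses into ``extends by approximation since $\cD$ is an operator core.''
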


\begin{proof}
  By induction, it again suffices to consider the case $n=2$.
  
  Let $f = f_1 \otimes f_2, g = g_1 \otimes g_2 \in \cD \subset H^2(\Omega)$. Then, with the notations from
  Section~\ref{sec:tensor},
  \[
    \langle \Delta_1 f_1 , \Delta_1 g_1 \rangle_{L^2(\Omega_1)} \langle f_2 , g_2 \rangle_{L^2(\Omega_2)}
    =
    \langle S_1f , S_1g \rangle_{L^2(\Omega)}
    ,
  \]
  as well as
  \[
    \langle f_1 , g_1 \rangle_{L^2(\Omega_1)} \langle \Delta_2 f_2 , \Delta_2 g_2 \rangle_{L^2(\Omega_2)}
    =
    \langle S_2f , S_2g \rangle_{L^2(\Omega)}
  \]
  and
  \[
    \langle f_1 , \Delta_1 g_1 \rangle_{L^2(\Omega_1)} \langle f_2 , \Delta_2 g_2 \rangle_{L^2(\Omega_2)}
    =
    \langle S_1f , S_2g \rangle_{L^2(\Omega)}
    =
    \langle S_2f , S_1g \rangle_{L^2(\Omega)}
    .
  \]
  Writing $\alpha = (\beta,\gamma)$ for multiindices in $\NN_0^d = \NN_0^{d_1} \times \NN_0^{d_2}$, we therefore obtain from the
  hypotheses that
  \begin{align*}
    \sum_{\abs{\alpha}=2} \frac{1}{\alpha!} \langle \partial^\alpha f ,  &\partial^\alpha g \rangle_{L^2(\Omega)}\\
    &=
    \sum_{m=0}^2 \sum_{\abs{\beta}=m} \frac{1}{\beta!} \langle \partial^\beta f_1 , \partial^\beta g_1 \rangle_{L^2(\Omega_1)}
      \sum_{\abs{\gamma}=2-m} \frac{1}{\gamma!} \langle \partial^\gamma f_2 , \partial^\gamma g_2 \rangle_{L^2(\Omega_2)}\\
    &=
    \frac{1}{2} \langle S_1f , S_1g \rangle_{L^2(\Omega)} + \langle S_1f , S_2g \rangle_{L^2(\Omega)}
      + \frac{1}{2} \langle S_2f , S_2g \rangle_{L^2(\Omega)}\\
    &=
    \frac{1}{2} \langle Sf , Sg \rangle_{L^2(\Omega)} = \frac{1}{2} \langle Tf , Tg \rangle_{L^2(\Omega)}
    .
  \end{align*}
  The latter extends by sesquilinearity to all $f,g \in \cD$. In turn, since $\cD$ is an operator core for $S = T$, it extends to
  all $f,g \in \Dom(T)$ by approximation. In particular, we have $\Dom(T) \subset H^2(\Omega)$. This completes the proof.
\end{proof}%

Condition~\eqref{eq:domain} usually entails that Green's formula can be justified for the partial derivatives of first order with
an overall vanishing sum of boundary terms. Examples of domains $\Omega_j$ where this can be done for $\cH_j = H_0^1(\Omega_j)$
and $\cH_j = H^1(\Omega_j)$ are discussed in a slightly different context in a recent joint work with M.~Egidi, see
Proposition~2.10 and Remark~2.4 in~\cite{ES20}. In the present note, the above criterion is demonstrated just for the case where
each $\Omega_j$ is one dimensional:

\begin{example}\label{ex:domain}
  For each $j$, suppose that $\Omega_j \subset \RR$ and that $\cH_j$ is such that $\Dom(\Delta_j) \subset H^2(\Omega_j)$ with
  $\Delta_j u = u''$ for all $u \in \Dom(\Delta_j)$. Then,~\eqref{eq:domain} is clearly satisfied, and from
  Proposition~\ref{prop:domain} it follows that $\Dom(T) \subset H^2(\Omega)$ with
  \[
    \sum_{\abs{\alpha}=2} \frac{1}{\alpha!} \langle \partial^\alpha f ,  \partial^\alpha g \rangle_{L^2(\Omega)}
    =
    \frac{1}{2} \langle Tg, Tg \rangle_{L^2(\Omega_j)}
  \]
  for all $f,g \in \Dom(T)$. Note that $\Dom(\Delta_j) \subset H^2(\Omega_j)$ with $\Delta_j u = u''$ for all
  $u \in \Dom(\Delta_j)$ certainly holds if $\cH_j$ contains $C_c^\infty(\Omega_j)$, which covers the standard choices for
  $\cH_j$ corresponding to Dirichlet, Neumann, or -- if $\Omega_j$ is bounded -- mixed or (quasi-)periodic boundary conditions.
\end{example}

\section*{Acknowledgements}
The author is grateful to A.~Dicke and M.~Egidi for fruitful discussions and helpful remarks on an earlier version of this
manuscript. He also thanks the anonymous reviewer for useful comments that helped to improve the manuscript.


\end{document}